\documentclass[11pt,a4paper]{article}
\usepackage[letterpaper,top=2.7cm,bottom=2.7cm,left=2.5cm,right=2.5cm,marginparwidth=1.75cm]{geometry}
\DeclareMathAlphabet{\mathpzc}{OT1}{pzc}{m}{it}
\usepackage{amsmath,amsfonts,amsthm,amssymb}
\usepackage{amsmath,amssymb,color}
\usepackage{graphicx,fancyhdr}
\usepackage{mathrsfs,float}
\usepackage{multirow}
\usepackage{dsfont}
\usepackage{subcaption}
\usepackage{booktabs}
\usepackage{yfonts}
\usepackage{natbib}
\usepackage{caption}
\usepackage{geometry}
\usepackage{bbm}
\makeatletter
\usepackage{hyperref}

\numberwithin{equation}{section}
\newtheorem{thm}{Theorem}[section]
\newtheorem{rem}[thm]{Remark}

\newtheorem{lem}[thm]{Lemma}

\begin{document}

\title{Logarithmic stability for determining the damping coefficient for the time-fractional damped wave equation}
\author{
Kai Yu \thanks{Corresponding author, School of Mathematics and Statistics, Ningbo University, 818 Fenghua Road, Ningbo 315211, China. E-mail: yukaimailbox@163.com} \and
Zhiyuan Li \thanks{ School of Mathematics and Statistics, Ningbo University, 818 Fenghua Road, Ningbo 315211, China. } 
}

\date{}

\maketitle

\begin{abstract}

This paper investigates the inverse problem of determining the spatially
dependent damping coefficient in a time-fractional damped wave equation,
where the damping term is given by a Caputo derivative of order
\(\alpha\in(0,1)\). We first prove the well-posedness of the direct problem in
exponentially weighted Sobolev spaces. Then, by means of the Fourier--Laplace
transform in time, the nonstationary problem is reduced to a family of
stationary elliptic equations with complex frequencies. Based on complex
geometrical optics solutions and a suitable integral identity, we estimate the
Fourier transform of the difference of two damping coefficients in terms of the
difference of their Dirichlet-to-Neumann maps. Combining low-frequency estimates
with a high-frequency decay argument, we obtain a conditional logarithmic stability result. 

\medskip
\noindent {\bf Keywords:} stability estimate, wave equations, time-fractional damping, Dirichlet-to-Neumann map, complex geometrical optics solution.

\medskip
\noindent {\bf Mathematics Subject Classifications 2020:} 35R30, 35L05, 35R11.

\end{abstract}

\section{Introduction}

Inverse coefficient problems for partial differential equations form an important
part of applied analysis and arise in many areas such as non-destructive testing,
medical imaging, and geophysical exploration \cite{Isa2006}. A central question
is whether spatially varying material parameters, such as conductivity, density,
potential, or damping coefficients, can be determined in a stable way from
boundary input-output measurements. This paper contributes to this topic by
studying a stability problem for a time-fractional damped wave equation. Such equations are used to describe media with memory and viscoelastic effects,
where the damping mechanism is nonlocal in time \cite{Mai2022}.

Let \(\mathbb R_+:=(0,\infty)\), and let \(\Omega\subset\mathbb R^n\) be a bounded
domain with smooth boundary \(\partial\Omega\). Set \(Q:=\Omega\times\mathbb R_+\) and \(\Sigma:=\partial\Omega\times\mathbb R_+.\)
We consider the following initial-boundary value problem:
\begin{equation}\label{eq1.1}
\left\{
\begin{array}{ll}
\partial_t^2 u(x,t)-\Delta u(x,t)
+p(x)\partial_t^\alpha u(x,t)+q(x)u(x,t)=0,
& (x,t)\in Q, \\[2mm]
u(x,t)=g(x,t), & (x,t)\in \Sigma, \\[2mm]
u(x,0)=0,\quad \partial_t u(x,0)=0, & x\in\Omega.
\end{array}
\right.
\end{equation}
Here \(\partial_t^\alpha\) denotes the Caputo derivative of order
\(\alpha\in(0,1)\), defined by
\[
\partial_t^\alpha u(x,t)
=
\frac{1}{\Gamma(1-\alpha)}
\int_0^t (t-\tau)^{-\alpha}\partial_\tau u(x,\tau)\,d\tau,
\qquad t>0,
\]
where \(\Gamma\) is the Gamma function. The coefficient \(p(x)\) represents the
damping coefficient, while \(q(x)\) is a potential term, which is assumed to be
known in this work. For a given boundary input \(g\), the corresponding
Dirichlet-to-Neumann (DtN) map is defined by
\[
\Lambda:g\mapsto \partial_\nu u|_{\partial\Omega\times\mathbb R_+}.
\]
The inverse problem considered in this paper is the stable determination of the
damping coefficient \(p(x)\) from the knowledge of \(\Lambda\).

The reconstruction of coefficients from boundary data has been extensively
studied. The Calderón problem, which asks whether the conductivity in an
elliptic equation can be determined from the DtN map, is a
prototype of such problems \cite{Cal1980}. The complex geometrical optics 
method introduced by Sylvester and Uhlmann \cite{SU1987} has become a fundamental
tool in this area and has led to many uniqueness and stability results; see, for
example, \cite{IUY2010,Nac1988,Nac1996}. For inverse coefficient problems for
classical wave equations, uniqueness of the potential from boundary
measurements was established in \cite{RS1988}. Further developments include the
boundary control method \cite{Bel1987} and Carleman estimate approaches
\cite{Kli1992}. Stability estimates for hyperbolic inverse problems have been
obtained from the DtN map \cite{Kian2014} and from partial
boundary observations \cite{KianSta2016}. The recovery of time-dependent
coefficients from a single boundary measurement \cite{Fei2023} and from partial
data on Riemannian manifolds \cite{KL2019} has also been investigated. A general
framework for stability in coefficient inverse problems for hyperbolic
equations, together with related numerical reconstruction schemes, was developed
in \cite{YLM2018}.

Inverse problems for damped wave equations have also attracted considerable
attention. For classical damped wave equations, Isakov \cite{Isa1991} proved
uniqueness for the simultaneous recovery of a damping coefficient and a
potential using boundary measurements. Recovery procedures based on the
DtN operator were proposed for the principal coefficient of a
damped wave equation in \cite{Rom2020}. Kian \cite{Kian2016} proved uniqueness
and stability for time-dependent damping coefficients and potentials from
partial boundary data, using Carleman estimates and geometric optics solutions.
Stability properties for related inverse problems from the initial-to-boundary
operator were studied in \cite{Amm2015,Amm2019}, where logarithmic and H\"older
type estimates were obtained under suitable assumptions.

The works mentioned above mainly concern local-in-time wave or damped wave
models. In contrast, wave equations with fractional attenuation involve
nonlocal time dynamics. Such models have been proposed in
\cite{CH2004,KMM2008,S1995} to describe power-law attenuation observed in
inhomogeneous media. Frequency-dependent attenuation laws are common in
applications \cite{S1994}, and time-nonlocal terms also arise naturally in
models of acoustic and viscoelastic wave propagation \cite{CH2003}. In
particular, fractional damping models are relevant in medical ultrasound and
other applications involving complex media. From the mathematical point of
view, the presence of memory kernels or fractional derivatives, such as
\(\partial_t^\alpha u\), changes the structure of the equation and makes the
corresponding inverse problems more delicate. Many methods for classical wave
equations do not apply directly to such nonlocal models.

There are relatively few results on inverse problems for wave equations with
time-nonlocal terms. Bukhgeim et al. \cite{BD1997} established stability
estimates for the recovery of a memory kernel from the DtN operator. Related uniqueness results were later extended to finite time
intervals \cite{Dya2003} and to partial boundary measurements \cite{BDU2001}.
The identification of memory kernels from other types of observations has also
been studied; see, for instance, \cite{Col2007,DS2015,JW2001}. More recently,
Kaltenbacher and Rundell \cite{Kal2022} investigated wave equations with
fractional damping and analyzed the interaction between the fractional order
and the damping profile. For further background on fractional models and
related inverse problems, we refer to the monograph \cite{Sel2016}.

In this paper, we investigate the inverse problem of determining the damping
coefficient \(p(x)\) in \eqref{eq1.1}. Our approach is based on a
Fourier--Laplace transform in time, which reduces the nonstationary problem to
a family of stationary elliptic equations with complex
frequency-dependent lower-order terms
\[
-p(x)(i\theta)^\alpha-q(x).
\]
We then construct complex geometrical optics solutions for the resulting
elliptic equations and derive a frequency-domain integral identity relating the
difference of two damping coefficients to the difference of the corresponding
DtN maps. By integrating the resulting estimates over a
suitable region in the frequency domain and by balancing the low-frequency and
high-frequency contributions, we prove a conditional logarithmic stability
estimate for the recovery of \(p(x)\). More precisely, for every
\(s\in(0,1/2)\), the stability exponent is
\[
\tau=\frac{2s}{n+4\alpha+2s}.
\]
This exponent can be chosen arbitrarily close to \(\frac{1}{n+4\alpha+1}\) by taking \(s\) sufficiently close to \(1/2\). The estimate also shows that, for
fixed \(n\) and \(s\), the stability deteriorates as the fractional order
\(\alpha\) increases.

The paper is organized as follows. Section \ref{sec2} introduces the functional
setting, states the well-posedness result for the direct problem, and formulates
the main stability theorem. Section \ref{sec3} is devoted to the proof of the
well-posedness of the direct problem. In Section \ref{sec4}, we construct the
complex geometrical optics solutions, derive the fundamental integral identity,
and prove the logarithmic stability estimate for the damping coefficient.
Finally, Section \ref{sec5} contains concluding remarks.


\section{Preliminaries and the main result}\label{sec2}

{
For the subsequent analysis, we first introduce the functional setting and several auxiliary results. Let \(L^p(\Omega)\), \(1\le p\le \infty\), denote the usual Lebesgue space on
\(\Omega\). The inner product in \(L^2(\Omega)\) is denoted by \((\cdot,\cdot)\),
and \(\|\cdot\|\) denotes the \(L^2(\Omega)\)-norm, unless otherwise specified.
For a nonnegative integer \(m\), we denote by \(W^{m,p}(\Omega)\) the Sobolev
space consisting of functions whose weak derivatives up to order \(m\) belong to
\(L^p(\Omega)\). We write \(H^m(\Omega):=W^{m,2}(\Omega).\)
The space \(H_0^m(\Omega)\) is defined as the closure of \(C_0^\infty(\Omega)\)
in \(H^m(\Omega)\). When the boundary is sufficiently smooth, this can be
equivalently characterized by the vanishing of boundary traces up to the appropriate order. For Banach spaces \(X\) and \(Y\), we denote by \(\mathcal B(X,Y)\) the space of bounded linear operators from \(X\) to \(Y\); see, e.g., \cite{AF2003}.

Let \(\gamma\ge0\) and let \(m\ge0\) be an integer. We define
\(H_\gamma^m(\Omega\times\mathbb R_+)\) as the space of all functions \(u\) such
that
\[
e^{-\gamma t}u\in H^m(\Omega\times\mathbb R_+).
\]
It is equipped with the norm
\[
\|u\|_{\gamma,m}^2
=
\int_{\Omega\times\mathbb R_+}
e^{-2\gamma t}
\sum_{|\beta|\le m}|D^\beta u(x,t)|^2\,dx\,dt .
\]
Here \(\beta\) denotes a multi-index in both the spatial and time variables.
Similarly, we define \(H_\gamma^m(\partial\Omega\times\mathbb R_+)\), and its
norm is denoted by \(\langle\cdot\rangle_{\gamma,m}\).

For the boundary data, we shall use the subspace
\[
\mathcal H_\gamma^1(\partial\Omega\times\mathbb R_+)
:=
\left\{
g\in H_\gamma^1(\partial\Omega\times\mathbb R_+):\ g(\cdot,0)=0\ \text{on }\partial\Omega
\right\},
\]
where the trace \(g(\cdot,0)\) is understood in the sense of the trace theorem.

We also recall the following well-posedness result for the wave equation in
exponentially weighted spaces. It follows from the standard well-posedness
theory for hyperbolic boundary value problems; see, for example, \cite{VG1980}.

\begin{lem}\label{lem wellp}
There exist constants \(\gamma_0>0\) and \(C>0\), depending only on \(\Omega\),
such that for every \(\gamma\ge\gamma_0\), \(g\in {\mathcal H}_\gamma^1(\partial\Omega\times\mathbb R_+)\) and \(f\in H_\gamma^0(\Omega\times\mathbb R_+),\)
the problem
\[
\left\{
\begin{array}{ll}
\partial_t^2 u(x,t)-\Delta u(x,t)=f(x,t),
& (x,t)\in Q, \\[2mm]
u(x,t)=g(x,t),
& (x,t)\in\Sigma, \\[2mm]
u(x,0)=0,\quad \partial_t u(x,0)=0,
& x\in\Omega,
\end{array}
\right.
\]
admits a unique solution \(u\in H_\gamma^1(\Omega\times\mathbb R_+).\) Moreover, the estimate
\[
\gamma\|u\|_{\gamma,1}^{2}
+
\left\langle\partial_{\nu}u\right\rangle_{\gamma,0}^{2}
\le
C\left(
\frac{1}{\gamma}\|f\|_{\gamma,0}^{2}
+
\langle g\rangle_{\gamma,1}^{2}
\right)
\]
holds. Here \(\partial_\nu\) denotes the outward normal derivative on \(\partial\Omega\).
\end{lem}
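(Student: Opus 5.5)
We reduce to the homogeneous-boundary case by lifting the Dirichlet data, then prove an energy estimate with exponential weight, and finally obtain the boundary trace of \(\partial_\nu u\) by a multiplier (Rellich-type) identity.

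\emph{Lifting.} Let \(G\) be an extension of \(g\) into \(\Omega\times\mathbb R_+\). We may take \(G(x,t)=\chi(x)\,(Eg)(x,t)\), where \(E\) is a bounded right inverse of the trace applied for each \(t\) and \(\chi\) is a cutoff near \(\partial\Omega\). We then set \(w=u-G\). In this splitting the time regularity of \(g\) in \(H^1_\gamma\) is not sufficient to put \(\partial_t^2G\) into \(H^0_\gamma\). For this reason we work instead with the weak (variational) formulation in the weighted spaces, or equivalently pass to the Fourier--Laplace transform. Writing \(\hat u(x,\sigma)\) with \(\sigma=\eta-i\gamma\), the problem becomes
\[
-\Delta\hat u-\sigma^2\hat u=\hat f \ \text{in }\Omega,\qquad \hat u=\hat g \ \text{on }\partial\Omega .
\]
The compatibility \(g(\cdot,0)=0\), together with zero initial data, makes the transform of \(\partial_t u\) equal to \(-i\sigma\hat u\) with no boundary term. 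By Plancherel in \(t\), weighted norms correspond to \(L^2(d\eta)\) of the transforms, with \(|\sigma|\) replacing \(\partial_t\).

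\emph{Resolvent estimates.} For each \(\sigma\) with \(\operatorname{Im}\sigma=-\gamma\), we first treat \(\hat g=0\). Multiplying by \(\bar{\hat u}\), integrating, and taking the imaginary part gives
\[
2\gamma|\eta|\,\|\hat u\|^2\le|(\hat f,\hat u)| .
\]
The real part gives \(\|\nabla\hat u\|^2-(\eta^2-\gamma^2)\|\hat u\|^2=\operatorname{Re}(\hat f,\hat u)\). Testing with \(-i\bar\sigma\bar{\hat u}\) (the transformed energy multiplier \(\partial_t u\)) yields
\[
\gamma\bigl(\|\nabla\hat u\|^2+|\sigma|^2\|\hat u\|^2\bigr)\le C\|\hat f\|\,|\sigma|\,\|\hat u\|,
\]
hence \(\gamma(\|\nabla\hat u\|^2+|\sigma|^2\|\hat u\|^2)\le C\gamma^{-1}\|\hat f\|^2\). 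For nonzero \(\hat g\), we lift it harmonically, \(\hat u=\hat h+\hat v\) with \(\Delta\hat h=0\), \(\hat h|_{\partial\Omega}=\hat g\). Then \(\hat v\) solves the previous problem with right side \(\hat f+\sigma^2\hat h\). The danger is the factor \(\sigma^2\hat h\), which costs one more power of \(|\sigma|\) than \(\langle g\rangle_{\gamma,1}\) controls. To handle it, we use a semiclassical lift instead: \(\hat h\) solves \(-\Delta\hat h+|\sigma|^2\hat h=0\). This satisfies
\[
|\sigma|^{1/2}\|\hat h\|_{H^1_{|\sigma|}}\lesssim\|\hat g\|_{H^{1}_{|\sigma|}(\partial\Omega)},
\]
which exactly absorbs the loss. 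This is the same \(\gamma\)-uniform estimate as in the Kreiss theory for hyperbolic mixed problems, where the Dirichlet condition satisfies the uniform Lopatinskii condition.

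\emph{Normal derivative.} We apply the Rellich identity with a smooth vector field \(X\) extending \(\nu\): multiply the equation by \(X\cdot\nabla\bar{\hat u}\), integrate by parts, and take real parts. This gives
\[
\|\partial_\nu\hat u\|^2_{L^2(\partial\Omega)}\le C\bigl(\|\nabla\hat u\|^2+|\sigma|^2\|\hat u\|^2+\|\hat f\|\,\|\nabla\hat u\|+\|\hat g\|^2_{H^1_{|\sigma|}(\partial\Omega)}\bigr),
\]
using \(|\nabla_T\hat g|^2+|\sigma|^2|\hat g|^2\) on the boundary. Inserting the interior bound, squaring out with Young's inequality, and integrating in \(\eta\) gives the stated estimate for \(\gamma\ge\gamma_0\).

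\emph{Remaining steps.} Uniqueness follows from the same estimate with zero data. The main obstacle is the \(\sigma\)-uniform boundary lift, i.e.\ obtaining the \(|\sigma|^{1/2}\) gain; I would cite \cite{VG1980} (or Kreiss/Sakamoto) for that step rather than redo the microlocal analysis.
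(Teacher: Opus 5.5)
There is a genuine gap at the step you yourself flag as the main one: the claim that the semiclassical lift ``exactly absorbs the loss'' is false. With $-\Delta\hat h+|\sigma|^2\hat h=0$ and $\hat h|_{\partial\Omega}=\hat g$, the remainder $\hat v=\hat u-\hat h$ has zero trace. Its right-hand side is $F=\hat f+(\sigma^2+|\sigma|^2)\hat h=\hat f+2\eta\sigma\hat h$. Your lift bound gives $\|2\eta\sigma\hat h\|\le 2|\sigma|\,\|\hat h\|_{H^1_{|\sigma|}}\lesssim|\sigma|^{1/2}\|\hat g\|_{H^1_{|\sigma|}(\partial\Omega)}$. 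Your resolvent bound then only yields
\[
\gamma\bigl(\|\nabla\hat v\|^2+|\sigma|^2\|\hat v\|^2\bigr)\le\gamma^{-1}\|F\|^2\lesssim\gamma^{-1}\|\hat f\|^2+\frac{|\sigma|}{\gamma}\,\|\hat g\|^2_{H^1_{|\sigma|}(\partial\Omega)} .
\]
The factor $|\sigma|/\gamma$ is unbounded as $|\eta|\to\infty$. After integrating in $\eta$ you would need $g\in H^{3/2}_\gamma$, not $g\in H^1_\gamma$. The half power of $|\sigma|$ gained by the lift reduces the loss of the harmonic lift but still leaves $(|\sigma|/\gamma)^{1/2}$ in norm.

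This is intrinsic to lifting at scale $|\sigma|^{-1}$ and then using the $L^2\to L^2$ resolvent bound. When $|\eta|$ dominates the tangential frequency, the true solution is a wave entering $\Omega$ that decays only at rate $\gamma$; in the one-dimensional model it is $e^{-i\sigma x}\hat g$. So $\hat v$ must carry that wave. The resolvent bound cannot see that $F$ lives in a thin boundary layer, and it overestimates $\hat v$ by the factor $|\sigma|/\gamma$. Deferring ``the lift'' to Kreiss or Sakamoto does not repair this. The lift estimate is elementary and true but insufficient; what that theory supplies is the full boundary estimate. Your proof therefore reduces to the bare citation the paper itself uses, since the paper gives no argument for this lemma beyond a reference to standard hyperbolic theory.

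The missing idea is to avoid lifting altogether and to use the Rellich identity inside the energy argument rather than after it. Put
\[
N^2=\|\nabla\hat u\|^2+|\sigma|^2\|\hat u\|^2,\qquad G^2=\|\nabla_T\hat g\|^2_{L^2(\partial\Omega)}+|\sigma|^2\|\hat g\|^2_{L^2(\partial\Omega)},\qquad B=\|\partial_\nu\hat u\|_{L^2(\partial\Omega)}.
\]
Testing with $-i\bar\sigma\bar{\hat u}$ when $\hat u|_{\partial\Omega}=\hat g$ keeps a boundary flux term and gives $\gamma N^2\le\|\hat f\|\,N+B\,G$. Your multiplier $X\cdot\nabla\bar{\hat u}$ gives $B^2\le C_0\bigl(\gamma N^2+\|\hat f\|\,N+G^2\bigr)$, with $C_0$ depending only on $\Omega$ for $\gamma\ge1$. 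The term $\gamma N^2$ comes from $\operatorname{Im}\sigma^2=-2\eta\gamma$ and is missing from your version of the Rellich bound. It is harmless precisely because the energy identity controls $\gamma N^2$.

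Write $BG\le(2C_0)^{-1}B^2+\tfrac{C_0}{2}G^2$ and absorb. This yields $\gamma N^2+B^2\le C\bigl(\gamma^{-1}\|\hat f\|^2+G^2\bigr)$ uniformly in $\eta$. By Plancherel, $\int_{\mathbb R}G^2\,d\eta\le\langle g\rangle_{\gamma,1}^2$; the condition $g(\cdot,0)=0$ is exactly what makes $\int_{\mathbb R}|\sigma|^2\|\hat g\|^2\,d\eta=\langle\partial_t g\rangle_{\gamma,0}^2$. In the time domain this is the classical pair of multipliers $e^{-2\gamma t}\partial_t u$ and $e^{-2\gamma t}X\cdot\nabla u$. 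It proves the lemma with no microlocal analysis, once the Rellich identity for $H^1$ data is justified by density.

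You should also supply existence, which your sketch omits. Since $\sigma^2\notin[0,\infty)$ for $\gamma>0$, the Dirichlet problem for $-\Delta-\sigma^2$ is uniquely solvable for every $\eta$. The bounds are uniform in the half-plane $\operatorname{Im}\sigma\le-\gamma_0$, so Paley--Wiener gives a causal solution.
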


}
We now state the solvability theorem for the direct problem \eqref{eq1.1}:

\begin{thm}\label{thm wellp}
Let \(0<\alpha<1\), and assume that \(p,q\in L^\infty(\Omega)\). Then there
exists a constant \(\gamma_0>0\), depending only on \(\Omega\), \(\alpha\),
\(\|p\|_{L^\infty(\Omega)}\), and \(\|q\|_{L^\infty(\Omega)}\), such that for
every \(\gamma\ge\gamma_0\) and every
\(g\in{\mathcal H}_{\gamma}^{1}(\partial\Omega\times\mathbb R_+)\), problem
\eqref{eq1.1} admits a unique solution \(u\in H_{\gamma}^{1}(\Omega\times\mathbb R_+).\)
Moreover, \(\partial_\nu u\in H_{\gamma}^{0}(\partial\Omega\times\mathbb R_+)\),
and the estimate
\[
\gamma\|u\|_{\gamma,1}^2
+
\langle\partial_\nu u\rangle_{\gamma,0}^2
\le
C\langle g\rangle_{\gamma,1}^2
\]
holds, where \(C>0\) depends on \(\Omega\), \(\alpha\),
\(\|p\|_{L^\infty(\Omega)}\), and \(\|q\|_{L^\infty(\Omega)}\), but is independent
of \(g\) and \(\gamma\).
\end{thm}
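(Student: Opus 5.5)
We treat the lower-order terms as a perturbation of the wave operator and close a fixed-point argument in $H_\gamma^1(\Omega\times\mathbb R_+)$ using Lemma \ref{lem wellp}. For $v\in H_\gamma^1(\Omega\times\mathbb R_+)$ with $v(\cdot,0)=0$, let $u=\mathcal T v$ be the solution given by Lemma \ref{lem wellp} with boundary data $g$ and source
\[
f=-p\,\partial_t^\alpha v-q\,v .
\]
A fixed point of $\mathcal T$ is exactly a solution of \eqref{eq1.1}. Since $\mathcal T$ is affine, the difference $\mathcal T v_1-\mathcal T v_2$ solves the homogeneous-data problem with source $-p\,\partial_t^\alpha(v_1-v_2)-q(v_1-v_2)$. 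Lemma \ref{lem wellp} then gives
\[
\gamma\|\mathcal T v_1-\mathcal T v_2\|_{\gamma,1}^2\le \frac{C}{\gamma}\Big(\|p\|_{L^\infty}^2\|\partial_t^\alpha w\|_{\gamma,0}^2+\|q\|_{L^\infty}^2\|w\|_{\gamma,0}^2\Big),\qquad w=v_1-v_2 .
\]
So contraction for large $\gamma$ reduces to bounding $\|\partial_t^\alpha w\|_{\gamma,0}$ by $\|w\|_{\gamma,1}$ with a constant independent of $\gamma$.

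The key step is a weighted bound for the Caputo derivative. Write
\[
\partial_t^\alpha w=k*\partial_t w,\qquad k(t)=\frac{t^{-\alpha}}{\Gamma(1-\alpha)} .
\]
Multiplication by $e^{-\gamma t}$ commutes with the convolution in the sense that
\[
e^{-\gamma t}(k*\partial_t w)=(e^{-\gamma t}k)*(e^{-\gamma t}\partial_t w).
\]
Young's inequality in $t$, applied for a.e. $x$, therefore gives
\[
\|\partial_t^\alpha w\|_{\gamma,0}\le \|e^{-\gamma t}k\|_{L^1(\mathbb R_+)}\,\|\partial_t w\|_{\gamma,0}=\gamma^{\alpha-1}\|\partial_t w\|_{\gamma,0}\le \gamma^{\alpha-1}\|w\|_{\gamma,1}.
\]
Here we used $\int_0^\infty e^{-\gamma t}t^{-\alpha}\,dt/\Gamma(1-\alpha)=\gamma^{\alpha-1}$. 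One can equally see this via Plancherel and the Laplace symbol $(\gamma+i\theta)^\alpha$, using $|(\gamma+i\theta)^\alpha|\le|\gamma+i\theta|$ for $|\gamma+i\theta|\ge1$.

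This bound is better than needed. Combining it with $\|w\|_{\gamma,0}\le\|w\|_{\gamma,1}$ yields
\[
\|\mathcal T v_1-\mathcal T v_2\|_{\gamma,1}^2\le \frac{C}{\gamma^2}\Big(\|p\|_{L^\infty}^2\gamma^{2\alpha-2}+\|q\|_{L^\infty}^2\Big)\|w\|_{\gamma,1}^2 .
\]
Choosing $\gamma_0$ large in terms of $\Omega,\alpha,\|p\|_{L^\infty},\|q\|_{L^\infty}$ makes $\mathcal T$ a strict contraction, and Banach's fixed point theorem gives existence and uniqueness in $H_\gamma^1$. One should check that $\mathcal T$ maps into functions with vanishing initial trace, so that the Caputo derivative agrees with the distributional one. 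This holds because the solutions of Lemma \ref{lem wellp} satisfy zero initial data.

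For the estimate, apply Lemma \ref{lem wellp} to the fixed point $u$ itself, with $f=-p\,\partial_t^\alpha u-qu$:
\[
\gamma\|u\|_{\gamma,1}^2+\langle\partial_\nu u\rangle_{\gamma,0}^2\le C\Big(\frac{1}{\gamma}\big(\|p\|_{L^\infty}^2\gamma^{2\alpha-2}+\|q\|_{L^\infty}^2\big)\|u\|_{\gamma,1}^2+\langle g\rangle_{\gamma,1}^2\Big).
\]
For $\gamma\ge\gamma_0$ large, the first term on the right is at most $\tfrac{\gamma}{2}\|u\|_{\gamma,1}^2$ and is absorbed into the left side. This yields the stated inequality with a constant independent of $\gamma$ and $g$, and in particular $\partial_\nu u\in H_\gamma^0(\partial\Omega\times\mathbb R_+)$.

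The main obstacle I anticipate is the weighted Caputo bound together with the bookkeeping of initial traces. The convolution identity $e^{-\gamma t}(k*h)=(e^{-\gamma t}k)*(e^{-\gamma t}h)$ settles the bound cleanly. The remaining subtlety is only to ensure the iteration space is closed, for instance by working on the closed subspace of $H_\gamma^1$ with zero initial trace.
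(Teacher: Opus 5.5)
Your proof is correct and follows the paper's argument: the same weighted Young-inequality bound $\|\partial_t^\alpha w\|_{\gamma,0}\le\gamma^{\alpha-1}\|\partial_t w\|_{\gamma,0}$, with the identical kernel computation, drives a Banach contraction built on Lemma \ref{lem wellp}. The differences are cosmetic. You iterate on $u$ with an affine map, whereas the paper first splits off the free wave $w$ and iterates on $v=u-w$. You also obtain the final estimate by absorbing the source term into the left-hand side, rather than bounding $w$ and $v$ separately and summing.
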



By Theorem \ref{thm wellp}, for each admissible coefficient \(p\), we define the DtN map
\[
\Lambda: {\mathcal H}_{\gamma}^{1}(\partial \Omega \times \mathbb{R}_+)
\to H_{\gamma}^{0}(\partial \Omega \times \mathbb{R}_+),
\qquad
\Lambda g := \left.\partial_{\nu} u\right|_{\partial \Omega \times \mathbb{R}_+},
\]
where \(u\) denotes the solution to \eqref{eq1.1} with boundary input \(g\). The inverse problem considered here is to determine the unknown coefficient \(p(x)\) from the knowledge of \(\Lambda_p\). We prove a conditional stability estimate for this problem, showing that two admissible coefficients \(p_1\) and \(p_2\) must be close whenever the associated DtN maps \(\Lambda_{p_1}\) and \(\Lambda_{p_2}\) are close.

\begin{thm}[Conditional logarithmic stability]\label{thm1.3}
Let \( \Omega\subset\mathbb R^n\), \(n\ge 3\), be a bounded smooth domain, and let
\(0<\alpha<1\). Suppose that
\[
p_j,q\in C^2(\overline\Omega),\qquad
\|p_j\|_{C^2(\overline\Omega)}+\|q\|_{C^2(\overline\Omega)}\le K,
\qquad j=1,2.
\]
There exists \(\gamma_0>0\), depending only on \(\Omega\), \(\alpha\) and \(K\), such that for every
\(\gamma\ge \gamma_0\), the corresponding DtN maps \(\Lambda_j\) are well defined for problem \eqref{eq1.1} for \(j=1,2\). Then, for every \(s\in(0,1/2)\), there exist
a constant
\[
C=C(\Omega,n,\alpha,K,\gamma,s)>0
\]
and an exponent
\[
\tau=\frac{2s}{n+4\alpha+2s}>0
\]
such that
\[
\|p_1-p_2\|_{L^2(\Omega)}
\le
\omega\left(
\|\Lambda_1-\Lambda_2\|_{\mathcal B
\left(\mathcal H_\gamma^1(\partial\Omega\times\mathbb R_+),
H_\gamma^0(\partial\Omega\times\mathbb R_+)\right)}
\right),
\]
where the modulus of continuity satisfies
\[
\omega(\varepsilon)\sim C\bigl(\log(1/\varepsilon)\bigr)^{-\tau}
\quad \text{as } \varepsilon\to 0.
\]
\end{thm}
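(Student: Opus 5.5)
Take the Fourier--Laplace transform in time, with $\lambda=\gamma+i\theta$, $\gamma\ge\gamma_0$ fixed and $\theta\in\mathbb R$. Zero initial data give the stationary problems
\[
-\Delta \hat u_j+\bigl(\lambda^2+p_j\lambda^\alpha+q\bigr)\hat u_j=0 \ \text{in }\Omega,\qquad \hat u_j=\hat g \ \text{on }\partial\Omega .
\]
By Plancherel in $t$ with weight $e^{-\gamma t}$, the operator norm $\|\Lambda_1-\Lambda_2\|=:\varepsilon$ bounds the frequency-wise elliptic DtN maps $\Lambda_j(\lambda):H^{1/2}(\partial\Omega)\to H^{-1/2}(\partial\Omega)$ in the following sense. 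For $g(x,t)=\phi(t)f(x)$ with $\phi$ smooth, vanishing at $0$ and with $\hat\phi$ concentrated near $\theta$, we get $\|(\Lambda_1(\lambda)-\Lambda_2(\lambda))f\|\lesssim \varepsilon\,(1+|\lambda|)\|f\|_{H^1(\partial\Omega)}$ up to constants depending on $\gamma$. Theorem~\ref{thm wellp} guarantees that the forward maps exist and are bounded.

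Next comes the integral identity. For solutions $v_1$ of the equation with $p_1$ and $v_2$ of the adjoint equation with $p_2$, Green's formula gives
\[
\lambda^\alpha\int_\Omega (p_1-p_2)v_1v_2\,dx=\int_{\partial\Omega}(\Lambda_2(\lambda)-\Lambda_1(\lambda))v_1\,v_2\,d\sigma .
\]
I would insert Sylvester--Uhlmann CGO solutions $v_j=e^{\zeta_j\cdot x}(1+r_j)$ with $\zeta_j\in\mathbb C^n$, $\zeta_j\cdot\zeta_j=-(\lambda^2+p_j\lambda^\alpha+q)$ handled via the potential term, $\zeta_1+\zeta_2=-i\xi$, and $|\zeta_j|\sim\rho$. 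Since $n\ge3$, we can choose $\zeta_j$ for any $\xi\in\mathbb R^n$ with $\rho\ge|\xi|$. The Sylvester--Uhlmann estimate gives $\|r_j\|_{L^2}\lesssim (1+|\lambda|^2)/\rho$ once $\rho\gtrsim|\lambda|^2$, because the zeroth-order potential has size $\sim|\lambda|^2+K|\lambda|^\alpha$; the $C^2$ bounds give extra regularity for $r_j$ on the boundary. Dividing by $|\lambda^\alpha|$ (with $\theta$ in a fixed bounded window, so $|\lambda|\sim\gamma$) yields
\[
|\widehat{(p_1-p_2)\chi_\Omega}(\xi)|\le C\Bigl(\frac{|\lambda|^{2}}{\rho}+e^{C\rho}\varepsilon\Bigr)|\lambda|^{-\alpha}.
\]
The factor $|\lambda|^{\pm\alpha}$ is where the $4\alpha$ in the exponent will come from, once it is integrated over a frequency region in $\theta$ of size tied to $\rho$. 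I would integrate $\theta$ over a range $|\theta|\le R$ and average, keeping track of powers of $R$, to reproduce $\tau=2s/(n+4\alpha+2s)$.

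Finally, split the Fourier norm:
\[
\|p_1-p_2\|_{L^2}^2\le\int_{|\xi|\le R}|\hat p(\xi)|^2\,d\xi+\int_{|\xi|>R}|\hat p(\xi)|^2\,d\xi .
\]
For the high-frequency part, $(p_1-p_2)\chi_\Omega\in H^s(\mathbb R^n)$ for $s<1/2$ with norm bounded by $K$, since the extension by zero of a bounded $C^2$ function is only $H^{s}$ for $s<1/2$. Hence the tail is at most $CR^{-2s}$. The low-frequency part is bounded by $R^{n}\bigl(R^{4\alpha}/\rho^{2}+e^{C\rho}\varepsilon^2\bigr)$, where the $\alpha$-power arises from the scaling. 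Choosing $\rho\sim\log(1/\varepsilon)$ and optimizing $R$ against $R^{-2s}$ gives $R^{n+4\alpha+2s}\sim\rho^{2}$. Hence $\|p_1-p_2\|^2\lesssim(\log 1/\varepsilon)^{-4s/(n+4\alpha+2s)}$, i.e.\ the stated $\tau$.

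The main obstacle is twofold. First, the transfer from the time-domain operator norm to frequency-wise elliptic DtN bounds with explicit polynomial growth in $\lambda$. Second, the CGO remainder estimates uniform in complex $\lambda$, whose potential $\lambda^2+p\lambda^\alpha+q$ is complex and large. I would first try to make the bookkeeping of powers of $|\lambda|$ match the claimed $4\alpha$.
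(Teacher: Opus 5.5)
Your argument is correct in substance, but it follows a different route from the paper, and the $\alpha$-dependence does not arise where you say it does.

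\textbf{How the paper gets $4\alpha$.} The paper keeps the $\theta^2$ term in the phase. With its $\theta=\sigma-i\gamma$, so that your $\lambda$ equals $i\theta$, the condition $\zeta_j\cdot\zeta_j+\theta^2=0$ reads $\zeta_j\cdot\zeta_j=\lambda^2$. The CGO potential is then only $V_j=-p_j(i\theta)^\alpha-q$, and $\|w_j\|_{H^2(\Omega)}\le C(1+|\theta|^\alpha)/r$. The paper never extracts pointwise-in-frequency information from $\varepsilon$. It uses the $L^2(d\sigma)$ bound obtained from the $J^\alpha$ estimate, tests it on a $\sigma$-dependent datum built from normalized CGO traces, and averages $|\widetilde P(\xi)|^2$ over $\{|\xi|\le\rho,\ 0\le\sigma\le\rho\}$. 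Since $|\theta|\le C\rho$ on that set, the remainder contributes $(1+\rho^\alpha)^4\rho^n/r^2$. The choice $r=\rho^\delta$ with $\delta=(n+4\alpha+2s)/2$ then produces $\tau$. That growing $\sigma$-window is the only source of the $4\alpha$.

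\textbf{What your route does instead.} You work at a single frequency with $|\lambda|\sim\gamma$. You put all of $\lambda^2+p_j\lambda^\alpha+q$ into the potential, so your phase condition must be $\zeta_j\cdot\zeta_j=0$, not the $x$-dependent identity you wrote. You obtain a pointwise DtN bound from causal separable data $\phi(t)f(x)$. This buys two things:
\begin{itemize}
\item Admissibility of the data in $\mathcal H^1_\gamma$ is automatic. You never need to check that a prescribed family $\widehat g(\cdot,\sigma)$ is the transform of a causal function.
\item The remainder is $\alpha$-free. You get $|\widetilde P(\xi)|\le C(\rho^{-1}+e^{C\rho}\varepsilon)$ for $|\xi|\lesssim\rho$. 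Optimizing then gives the better exponent $\tau'=2s/(n+2s)$. This implies the theorem, because $(\log\frac1\varepsilon)^{-\tau'}\le(\log\frac1\varepsilon)^{-\tau}$ for small $\varepsilon$.
\end{itemize}
The price is the approximate-identity step. A nonzero causal $\phi$ cannot have compactly supported transform. You therefore need continuity (indeed analyticity) of $\lambda\mapsto\Lambda_j(\lambda)$ on $\operatorname{Re}\lambda\ge\gamma_0$, plus a polynomial bound on $\|\Lambda_j(\lambda)\|$, to control the tails of $|\widehat\phi|^2$ when passing from the $L^2$ average to the value at a single $\lambda$.

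\textbf{Drop the attempt to reproduce $4\alpha$.} In your estimate, the bound $R^n(R^{4\alpha}/\rho^2+e^{C\rho}\varepsilon^2)$ holds only because $R^{4\alpha}\ge 1$, not because of any scaling. Also, the factor $|\lambda|^{-\alpha}$ multiplies only the boundary term, not the CGO remainder. The plan of averaging over $|\theta|\le R$ with $\lambda^2$ still in the potential would actually fail. It forces $\rho\gtrsim R^2$ and gives a remainder of order $R^4/\rho^2$, hence only the exponent $2s/(n+4+2s)<\tau$. Letting the frequency window grow without that loss requires absorbing $\lambda^2$ into $\zeta_j\cdot\zeta_j$, as the paper does. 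With your pointwise bound no such window is needed: state the fixed-frequency result with $\tau'$ and deduce $\tau$ from it.
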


\begin{rem}
For some fixed \(n,s\), the logarithmic stability exponent \(\tau\) is decreasing with respect to
\(\alpha\). Thus, a larger fractional order in the nonlocal damping term leads
to a weaker stability estimate. Moreover, as \(s\uparrow 1/2\), the exponent \(\tau\) approaches \(1/(4\alpha+4)\) when \(n=3\).
\end{rem}


\section{Well-posedness of the direct problem}\label{sec3}

{
\begin{proof}[\textbf{Proof of Theorem \ref{thm wellp}}]
We first prove an estimate for the Caputo derivative in the weighted space.
Let \(\eta\in H^1_\gamma(\Omega\times\mathbb R_+)\) with
\(\eta(\cdot,0)=0\). Since \(0<\alpha<1\), we have
\[
\partial_t^\alpha \eta(x,t)
=
\frac{1}{\Gamma(1-\alpha)}
\int_0^t (t-\tau)^{-\alpha}\partial_\tau\eta(x,\tau)\,d\tau .
\]
Multiplying by \(e^{-\gamma t}\), we obtain
\[
e^{-\gamma t}\partial_t^\alpha\eta(x,t)
=
\frac{1}{\Gamma(1-\alpha)}
\int_0^t
e^{-\gamma(t-\tau)}(t-\tau)^{-\alpha}
e^{-\gamma\tau}\partial_\tau\eta(x,\tau)\,d\tau .
\]
Viewing the right-hand side as a convolution in time, we obtain, for almost every
\(x\in\Omega\), by Young's inequality,
\[
\|e^{-\gamma t}\partial_t^\alpha\eta(x,\cdot)\|_{L^2(\mathbb R_+)}
\le
\frac{1}{\Gamma(1-\alpha)}
\|t^{-\alpha}e^{-\gamma t}\|_{L^1(\mathbb R_+)}
\|e^{-\gamma t}\partial_t\eta(x,\cdot)\|_{L^2(\mathbb R_+)} .
\]
Since
\[
\int_0^\infty t^{-\alpha}e^{-\gamma t}\,dt
=
\gamma^{\alpha-1}\Gamma(1-\alpha),
\]
we get
\[
\|e^{-\gamma t}\partial_t^\alpha\eta(x,\cdot)\|_{L^2(\mathbb R_+)}
\le
\gamma^{\alpha-1}
\|e^{-\gamma t}\partial_t\eta(x,\cdot)\|_{L^2(\mathbb R_+)} .
\]
Squaring and integrating over \(\Omega\), we obtain
\begin{equation}\label{frac esti}
\|\partial_t^\alpha\eta\|_{\gamma,0}
\le
\gamma^{\alpha-1}\|\partial_t\eta\|_{\gamma,0}
\le
\gamma^{\alpha-1}\|\eta\|_{\gamma,1}.
\end{equation}

We now decompose the solution as \(u=w+v,\) where \(w\) solves the wave equation
\begin{equation}\label{wave homo}
\begin{cases}
\partial_t^2 w-\Delta w=0, & (x,t)\in Q,\\
w=g, & (x,t)\in \Sigma,\\
w(\cdot,0)=0,\,\,\partial_t w(\cdot,0)=0, &x\in\Omega.
\end{cases}
\end{equation}
By Lemma \ref{lem wellp}, there exist constants \(\gamma_0'>0\) and
\(C'>0\), depending only on \(\Omega\), such that, for every
\(\gamma\ge\gamma_0'\), problem \eqref{wave homo} has a unique solution
\(w\in H^1_\gamma(\Omega\times\mathbb R_+)\) satisfying
\begin{equation}\label{wave esti}
\gamma\|w\|_{\gamma,1}^2
+
\langle\partial_\nu w\rangle_{\gamma,0}^2
\le
C'\langle g\rangle_{\gamma,1}^2 .
\end{equation}
Then \(v=u-w\) must satisfy
\begin{equation}\label{wave f}
\begin{cases}
\partial_t^2 v-\Delta v
=
-p(x)\partial_t^\alpha(w+v)-q(x)(w+v)
=:f(x,t),
& (x,t)\in Q,\\
v=0, & (x,t)\in \Sigma,\\
v(\cdot,0)=0,\quad \partial_t v(\cdot,0)=0, &x\in\Omega.
\end{cases}
\end{equation}
Again by Lemma \ref{lem wellp}, there exist constants \(\gamma_0''>0\) and
\(C''>0\), depending only on \(\Omega\), such that, for every
\(\gamma\ge\gamma_0''\) and every \(f\in H^0_\gamma(\Omega\times\mathbb R_+)\),
the homogeneous boundary problem \eqref{wave f} has a unique solution
\(v\in H^1_\gamma(\Omega\times\mathbb R_+)\) satisfying
\begin{equation}\label{wave esti1}
\gamma\|v\|_{\gamma,1}^2
+
\langle\partial_\nu v\rangle_{\gamma,0}^2
\le
\frac{C''}{\gamma}\|f\|_{\gamma,0}^2 .
\end{equation}
Therefore, the solution operator
\[
E:H^0_\gamma(\Omega\times\mathbb R_+)
\longrightarrow
H^1_\gamma(\Omega\times\mathbb R_+),
\qquad
f\mapsto v,
\]
is well-defined and satisfies
\begin{equation}\label{E bound}
\|Ef\|_{\gamma,1}
\le
\frac{\sqrt{C''}}{\gamma}\|f\|_{\gamma,0}.
\end{equation}

We now solve \eqref{wave f} by a fixed-point argument. The equation for \(v\)
is equivalent to
\begin{equation}\label{fixed point}
v
=
E\left(
-p\partial_t^\alpha(w+v)-q(w+v)
\right).
\end{equation}
Set
\[
M:=\max\{\|p\|_{L^\infty(\Omega)},\|q\|_{L^\infty(\Omega)}\}.
\]
From \eqref{frac esti}, we obtain
\begin{equation}\label{f esti}
\begin{aligned}
\| f \|_{\gamma,0}
&\leq M \bigl( \| \partial_t^\alpha (w+v) \|_{\gamma,0}
+ \| w+v \|_{\gamma,0} \bigr) \\[2pt]
&\leq M \bigl( \gamma^{\alpha-1} \| w+v \|_{\gamma,1}
+ \| w+v \|_{\gamma,1} \bigr) \\[2pt]
&= M \bigl( \gamma^{\alpha-1} + 1 \bigr) \bigl(\| w\|_{\gamma,1}+\|v\|_{\gamma,1}\bigr).
\end{aligned}    
\end{equation}

Define
\[
\Phi(v)
:=
E\left(
-p\partial_t^\alpha(w+v)-q(w+v)
\right),
\qquad
v\in H^1_\gamma(\Omega\times\mathbb R_+).
\]
Then \(\Phi\) maps \(H^1_\gamma(\Omega\times\mathbb R_+)\) into itself. By
\eqref{E bound} and \eqref{f esti},
\begin{equation}\label{mapping}
\| \Phi(v) \|_{\gamma,1}
\leq \frac{\sqrt{C''}}{{\gamma}} \| f \|_{\gamma,0} \leq \frac{\sqrt{C''}\,M\bigl( \gamma^{\alpha-1}+1 \bigr)}{{\gamma}}
\; \bigl( \| w \|_{\gamma,1} + \| v \|_{\gamma,1} \bigr).   
\end{equation}
Moreover, for any \(v_1,v_2\in H^1_\gamma(\Omega\times\mathbb R_+)\),
\[
\begin{aligned}
\|\Phi(v_1)-\Phi(v_2)\|_{\gamma,1}
&\le
\frac{\sqrt{C''}}{\gamma}
\left\|
p\partial_t^\alpha(v_1-v_2)+q(v_1-v_2)
\right\|_{\gamma,0}  \\
&\le
\frac{\sqrt{C''}M(\gamma^{\alpha-1}+1)}{\gamma}
\|v_1-v_2\|_{\gamma,1}.
\end{aligned}
\]
Set
\[
A_\gamma
:=
\frac{\sqrt{C''}M(\gamma^{\alpha-1}+1)}{\gamma}.
\]
Since \(0<\alpha<1\), we have \(A_\gamma\to0\) as \(\gamma\to\infty\).
Choose \(\gamma_0\ge \max\{\gamma_0',\gamma_0'',1\}\)
large enough such that
\[
A_\gamma\le \frac12,
\qquad
\gamma\ge\gamma_0.
\]
Then \(\Phi\) is a contraction on the Banach space
\(H^1_\gamma(\Omega\times\mathbb R_+)\). Hence there exists a unique fixed point
\(v\in H^1_\gamma(\Omega\times\mathbb R_+)\) solving \eqref{fixed point}.
Consequently, \(u=w+v\) is a solution of the original problem.

It remains to derive the a priori estimate. Since \(v=\Phi(v)\), by
\eqref{mapping} we have
\[
\|v\|_{\gamma,1}
\le
A_\gamma\left(\|w\|_{\gamma,1}+\|v\|_{\gamma,1}\right).
\]
Using \(A_\gamma\le1/2\), we obtain
\begin{equation}\label{v esti}
\|v\|_{\gamma,1}
\le
\frac{A_\gamma}{1-A_\gamma}\|w\|_{\gamma,1}
\le
2A_\gamma\|w\|_{\gamma,1}.
\end{equation}
Next, by \eqref{wave esti1} and \eqref{f esti},
\[
\begin{aligned}
\gamma\|v\|_{\gamma,1}^2
+
\langle\partial_\nu v\rangle_{\gamma,0}^2
&\le
\frac{C''}{\gamma}
M^2(\gamma^{\alpha-1}+1)^2
\left(\|w\|_{\gamma,1}+\|v\|_{\gamma,1}\right)^2 .
\end{aligned}
\]
Using \eqref{v esti} and \(A_\gamma\le1/2\), we have
\[
\|w\|_{\gamma,1}+\|v\|_{\gamma,1}
\le
(1+2A_\gamma)\|w\|_{\gamma,1}
\le
2\|w\|_{\gamma,1}.
\]
Therefore, from \eqref{wave esti}, we get
\begin{equation}\label{v energy estimate}
\gamma\|v\|_{\gamma,1}^2
+
\langle\partial_\nu v\rangle_{\gamma,0}^2
\le
\frac{4C'C''M^2(\gamma^{\alpha-1}+1)^2}{\gamma^2}
\langle g\rangle_{\gamma,1}^2\le C\langle g\rangle_{\gamma,1}^2,
\end{equation}
where \(C>0\) is independent of \(g\). Here we use the fact that the factor \(\frac{(\gamma^{\alpha-1}+1)^2}{\gamma^2}\) is bounded uniformly for \(\gamma\ge\gamma_0\). 

Finally, since \(u=w+v\), we have
\[
\|u\|_{\gamma,1}^2
\le
2\|w\|_{\gamma,1}^2+2\|v\|_{\gamma,1}^2,
\]
and
\[
\langle\partial_\nu u\rangle_{\gamma,0}^2
\le
2\langle\partial_\nu w\rangle_{\gamma,0}^2
+
2\langle\partial_\nu v\rangle_{\gamma,0}^2.
\]
Combining \eqref{wave esti} and \eqref{v energy estimate}, we obtain
\[
\begin{aligned}
\gamma\|u\|_{\gamma,1}^2
+
\langle\partial_\nu u\rangle_{\gamma,0}^2
\le 2\left(
\gamma\|w\|_{\gamma,1}^2
+
\langle\partial_\nu w\rangle_{\gamma,0}^2
\right) +
2\left(
\gamma\|v\|_{\gamma,1}^2
+
\langle\partial_\nu v\rangle_{\gamma,0}^2
\right) \le C\langle g\rangle_{\gamma,1}^2,
\end{aligned}
\]
where \(C>0\) depends only on \(\Omega\), \(\alpha\),
\(\|p\|_{L^\infty(\Omega)}\), and \(\|q\|_{L^\infty(\Omega)}\), but is independent
of \(g\) and \(\gamma\ge\gamma_0\). Moreover, uniqueness of the solution follows from the contraction principle applied to the difference of two solutions. This completes the proof.
\end{proof}

}

\section{Stability estimate of the inverse problem}\label{sec4}

\begin{proof}[\textbf{Proof of Theorem \ref{thm1.3}}]

We now give the proof of the conditional logarithmic stability estimate.
The overall strategy is as follows. First, we apply the Fourier--Laplace
transform in time, which reduces the non‑stationary problem to a family of
stationary elliptic equations parameterized by the complex frequency
\(\theta=\sigma-i\gamma\). In the frequency domain we construct complex
geometrical optics (CGO) solutions and derive a key integral identity relating
\(P=p_1-p_2\) to the difference of the DtN maps
\(\Lambda_1^\theta-\Lambda_2^\theta\). By estimating the resulting terms and
integrating over a suitable low‑frequency region, we obtain an upper bound for
the Fourier transform \(\widetilde P\). Combining this with a high‑frequency
decay estimate, which follows from the \(H^s\)-regularity of the zero extension
of \(P\) for \(s<1/2\), and then choosing the free parameters appropriately, we
balance the low‑ and high‑frequency contributions. This yields a logarithmic
bound for \(\widetilde P\) in \(L^2(\mathbb R^n)\), and the desired stability
estimate follows from Plancherel's theorem.

In what follows, \(C\) denotes a generic positive constant that may change from
line to line and depends only on the a priori data \(\Omega, n, \alpha, K,
\gamma\) and, where applicable, on the Sobolev index \(s\).

\medskip

The construction of CGO solutions relies on the following lemma, which is a
generalization of a classical result by Sylvester and Uhlmann \cite{SU1987}
(see also \cite{Isak1991}).  In the statement below, which was proved in
\cite{BD1997}, the dot denotes the ``inner product without complex conjugation'';
i.e., \(x\cdot y = x_1y_1+\cdots+x_ny_n\).

\begin{lem}\label{lem ell eq}
Consider the equation
\begin{equation}\label{eq spe solu}
(\Delta+\theta^2)v(x)+a(x)v(x)=0
\quad\text{in }\Omega .
\end{equation}
Assume that \(a\in C^\ell(\overline\Omega)\) for some integer \(\ell\ge0\).
Then there exist constants \(C_1,C_2,C_3,C_4>0\) such that, for every
\(\zeta\in\mathbb C^n\) satisfying
\[
\zeta\cdot\zeta+\theta^2=0,
\qquad
|\operatorname{Im}\zeta|\ge\sqrt2,
\qquad
|\zeta|\ge C_1\|a\|_{C^\ell(\overline\Omega)},
\]
equation \eqref{eq spe solu} admits a solution of the form
\[
v(x,\zeta)=e^{\zeta\cdot x}\bigl(1+w(x,\zeta)\bigr).
\]
Moreover,
\[
\|w(\cdot,\zeta)\|_{H^\ell(\Omega)}
\le
\frac{C_2}{|\zeta|}
\|a\|_{C^\ell(\overline\Omega)},\quad \|v(\cdot,\zeta)\|_{H^\ell(\Omega)}
\le
C_3 e^{C_4|\zeta|}.
\]
Here \(C_4\) depends only on \(\Omega\), while \(C_1,C_2\), and \(C_3\) depend on
\(\Omega\) and \(\ell\).
\end{lem}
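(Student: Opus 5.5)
The lemma to prove is Lemma \ref{lem ell eq}: the existence of CGO solutions for $(\Delta+\theta^2+a)v=0$. I would follow the Sylvester--Uhlmann scheme.

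\textbf{Reduction.} Insert the ansatz $v=e^{\zeta\cdot x}(1+w)$. Using $\zeta\cdot\zeta+\theta^2=0$, we get
\[
e^{-\zeta\cdot x}(\Delta+\theta^2)(e^{\zeta\cdot x}w)=\Delta w+2\zeta\cdot\nabla w .
\]
Hence $w$ must solve
\[
\Delta w+2\zeta\cdot\nabla w+a w=-a \quad\text{in }\Omega .
\]
Extend $a$ to a compactly supported function $\tilde a\in C^\ell_0(B)$ on a ball $B\supset\overline\Omega$, with $\|\tilde a\|_{C^\ell}\le C\|a\|_{C^\ell(\overline\Omega)}$, and solve the equation on $\mathbb R^n$.

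\textbf{Key step: inverting $\Delta+2\zeta\cdot\nabla$.} The symbol of this operator is $-|\xi|^2+2i\zeta\cdot\xi$. Following Sylvester--Uhlmann (or H\"ahner's periodic version, which is convenient for a bounded domain), I would construct a right inverse $G_\zeta$ satisfying
\[
\|G_\zeta f\|_{H^\ell(B)}\le \frac{C}{|\operatorname{Im}\zeta|}\|f\|_{H^\ell(B)} .
\]
In the periodic construction one works on a cube containing $B$ and takes Fourier series with shifted frequencies $\xi\in\tfrac{\pi}{R}(\mathbb Z^n+\tfrac12 e)$, where $e$ is the direction of $\operatorname{Im}\zeta$. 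The imaginary part of the symbol is then $2\operatorname{Im}\zeta\cdot\xi$, and it is bounded below by $c|\operatorname{Im}\zeta|$ uniformly in $\xi$. This gives the $L^2$ bound, and the bound commutes with derivatives, giving $H^\ell$.

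\textbf{Main obstacle: complex $\theta$.} Here $\theta$ is complex, $\theta=\sigma-i\gamma$, so we must check that $|\operatorname{Im}\zeta|$ is comparable to $|\zeta|$. This is needed to obtain the decay $1/|\zeta|$ rather than $1/|\operatorname{Im}\zeta|$. From $\zeta\cdot\zeta=-\theta^2$ we get
\[
|\operatorname{Re}\zeta|^2-|\operatorname{Im}\zeta|^2=-\operatorname{Re}\theta^2 ,
\]
so for $|\zeta|$ large relative to $|\theta|$ both parts are of order $|\zeta|$. I would build this into the constants, or use the hypothesis $|\zeta|\ge C_1\|a\|$ together with the specific choice of $\zeta$ made later in the paper. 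The $\sqrt2$ lower bound serves only to keep $|\operatorname{Im}\zeta|$ away from zero.

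\textbf{Neumann series.} Write the equation as the fixed-point problem
\[
w=G_\zeta\bigl(-\tilde a-\tilde a w\bigr).
\]
The operator $w\mapsto G_\zeta(\tilde a w)$ has norm at most $C\|a\|_{C^\ell}/|\zeta|$ on $H^\ell(B)$, since multiplication by a $C^\ell$ function is bounded on $H^\ell$. This norm is at most $1/2$ once $|\zeta|\ge C_1\|a\|_{C^\ell}$, so a Neumann series gives $w$ with
\[
\|w\|_{H^\ell}\le 2C\|\tilde a\|_{H^\ell}/|\zeta|\le C_2\|a\|_{C^\ell}/|\zeta| .
\]

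\textbf{Bound on $v$.} Since $|e^{\zeta\cdot x}|\le e^{|\zeta|\sup_\Omega|x|}$ and $\ell$ derivatives of $e^{\zeta\cdot x}$ contribute polynomial factors $|\zeta|^\ell$, the Leibniz rule gives
\[
\|v\|_{H^\ell(\Omega)}\le C|\zeta|^\ell e^{|\zeta|\operatorname{diam}}(1+\|w\|_{H^\ell})\le C_3e^{C_4|\zeta|} ,
\]
where $C_4$ depends only on $\Omega$. The polynomial factor $|\zeta|^\ell$ is absorbed by slightly enlarging $C_4$, for example to $C_4=\sup_\Omega|x|+1$.
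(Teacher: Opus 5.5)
The paper does not prove this lemma; it quotes it from \cite{BD1997}. Your Sylvester--Uhlmann/H\"ahner route is the standard one behind such results, so it has to stand on its own. It fails at exactly the point you flagged as the main obstacle.

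\textbf{A convention slip.} With the ansatz $e^{\zeta\cdot x}(1+w)$, the operator $\Delta+2\zeta\cdot\nabla$ has symbol $-|\xi|^2-2\operatorname{Im}\zeta\cdot\xi+2i\operatorname{Re}\zeta\cdot\xi$. Its imaginary part is therefore $2\operatorname{Re}\zeta\cdot\xi$, not $2\operatorname{Im}\zeta\cdot\xi$; your formula belongs to H\"ahner's $e^{i\zeta\cdot x}$ normalization. The lattice shift must be taken along $\operatorname{Re}\zeta$, after rotating coordinates so that this is a coordinate axis, since for a non-coordinate direction $e$ the quantity $e\cdot\xi$ is not bounded below on $\frac{\pi}{R}(\mathbb Z^n+\tfrac12 e)$. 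The bound this yields is $\|G_\zeta\|\le R/(\pi|\operatorname{Re}\zeta|)$, and the hypothesis $|\operatorname{Im}\zeta|\ge\sqrt2$ plays no role in the inversion.

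\textbf{The real gap.} The lemma asks for smallness from $|\zeta|\ge C_1\|a\|_{C^\ell}$ and decay $C_2\|a\|_{C^\ell}/|\zeta|$, with constants independent of $\theta$. Under the stated hypotheses, however, $|\operatorname{Re}\zeta|$ need not be comparable to $|\zeta|$. Take $\theta=\sigma-i\gamma$ and $\zeta=(\gamma+i\sigma)e_1$ with $\sigma\gg\gamma>0$. Then $\zeta\cdot\zeta+\theta^2=0$ and $|\operatorname{Im}\zeta|=\sigma\ge\sqrt2$, but $|\operatorname{Re}\zeta|=\gamma$. Now choose $\sigma=\frac{\pi}{R}(j^2+\tfrac14)$ and the lattice mode $\xi=\frac{\pi}{R}(-\tfrac12,j,0,\dots,0)$. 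At this mode the real part of the symbol vanishes and its modulus is $\pi\gamma/R$, so the periodic inverse has norm at least $R/(\pi\gamma)$, not $O(1/|\zeta|)$. Because $\operatorname{Re}\zeta\parallel\operatorname{Im}\zeta$ here, this holds whichever direction you shift in. Consequently neither the Neumann-series smallness nor the $1/|\zeta|$ decay follows from your construction.

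Your two suggested remedies do not close the gap. Absorbing the issue into the constants makes $C_1,C_2$ depend on $\theta$, or on $|\zeta|/|\operatorname{Re}\zeta|$. Invoking the later choice of $\zeta$ adds a hypothesis to the lemma.

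\textbf{What your argument does prove.} After the correction, it proves a variant: if $|\operatorname{Re}\zeta|\ge C_1\|a\|_{C^\ell(\overline\Omega)}$, then $\|w\|_{H^\ell}\le C_2\|a\|_{C^\ell}/|\operatorname{Re}\zeta|$, and your bound on $v$ goes through unchanged. This variant suffices for Step 3. There $\operatorname{Re}\zeta_j=(-1)^j\lambda$ with $|\lambda|^2=|\xi|^2/4+\gamma^2+r^2$, and since $\sigma\le\rho\le r$ one has $|\lambda|\ge\max\{r,|\theta|\}$ and $|\zeta_j|^2\le 3|\lambda|^2$. Obtaining the lemma as literally stated would require a different right inverse in the regime $|\operatorname{Re}\zeta|\ll|\zeta|$; otherwise the statement should be rephrased in terms of $|\operatorname{Re}\zeta|$.
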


With this lemma at hand, we proceed to the proof.

\medskip
\noindent
\textbf{Step 1.  Fourier--Laplace reduction.} 

Let \(h(t)\) be a causal function defined on \(\mathbb R_+=(0,\infty)\), and denote by the same symbol its zero extension to \(\mathbb R\). For fixed \(\gamma>0\), its Fourier--Laplace transform is
\[
\widehat h(\theta) = \frac1{\sqrt{2\pi}} \int_0^\infty e^{-i\theta t} h(t)\,dt,
\qquad \theta = \sigma - i\gamma,\; \sigma\in\mathbb R .
\]
Equivalently,
\[
\widehat h(\sigma-i\gamma) = \mathcal F_t\bigl(e^{-\gamma t}h(t)\bigr)(\sigma).
\]
By Plancherel's theorem, for any Hilbert space \(X\), we have
\begin{equation}\label{eq:weighted-plancherel-main}
\int_{\mathbb R} \|\widehat h(\sigma-i\gamma)\|_X^2\,d\sigma
= \int_0^\infty e^{-2\gamma t}\|h(t)\|_X^2\,dt .
\end{equation}

Let \(g\in \mathcal H_\gamma^1(\partial\Omega\times\mathbb R_+)\) and let \(u_j\) \((j=1,2)\) be the solution of \eqref{eq1.1} corresponding to the coefficient \(p_j\). Applying the Fourier--Laplace transform in time to
\[
\partial_t^2 u_j - \Delta u_j + p_j(x)\partial_t^\alpha u_j + q(x)u_j = 0,
\]
and using the vanishing initial data \(u_j(\cdot,0)=\partial_t u_j(\cdot,0)=0\), we obtain
\[
\widehat{\partial_t^2 u_j} = (i\theta)^2 \widehat u_j = -\theta^2 \widehat u_j .
\]
For the fractional derivative, the definition of the Caputo derivative together with the initial conditions yields
\[
\widehat{\partial_t^\alpha u_j}(\theta) = (i\theta)^\alpha \widehat u_j(\theta),
\]
where the principal branch is taken; this is well defined because \(\operatorname{Re}(i\theta)=\gamma>0\).

Hence, for almost every \(\sigma\in\mathbb R\), the Fourier--Laplace transform
\(\widehat u_j(\cdot,\theta)\), with \(\theta=\sigma-i\gamma\), satisfies the
stationary boundary value problem
\begin{equation}\label{eq:frequency-problem-main}
\Delta \widehat u_j
+
\bigl( \theta^2-p_j(x)(i\theta)^\alpha-q(x) \bigr)\widehat u_j=0
\quad\text{in }\Omega,
\qquad
\widehat u_j|_{\partial\Omega}=\widehat g .
\end{equation}
By the uniqueness of the time-domain problem and the Fourier--Laplace transform,
the Dirichlet problem \eqref{eq:frequency-problem-main} has a unique solution
for almost every \(\theta=\sigma-i\gamma\), \(\sigma\in\mathbb R\). Therefore,
for such \(\theta\), we define the corresponding stationary DtN map by
\[
\Lambda_j^\theta \widehat g
:=
\partial_\nu \widehat u_j|_{\partial\Omega}.
\]
Since the trace and the normal trace commute with the Fourier--Laplace transform,
we have
\begin{equation}\label{eq:DN-transform-main}
\widehat{\Lambda_j g}(\cdot,\theta)
=
\Lambda_j^\theta \widehat g(\cdot,\theta),
\qquad
\text{for a.e. }\sigma\in\mathbb R .
\end{equation}

We will also need a frequency-domain estimate for the operator norm. We now recall the fractional integral operator \(J^\alpha\) defined by
\[
(J^\alpha h)(t) = \frac{1}{\Gamma(\alpha)} \int_0^t (t-\tau)^{\alpha-1} h(\tau)\,d\tau .
\]
Its Fourier--Laplace transform satisfies
\[
\widehat{J^\alpha h}(\theta) = (i\theta)^{-\alpha} \widehat h(\theta).
\]
To estimate its weighted norm, write \(k(t) = \frac{1}{\Gamma(\alpha)} t^{\alpha-1}\) and
\[
(J^\alpha h)(t) = (k * h)(t) = \int_0^t k(t-\tau) h(\tau)\,d\tau .
\]
Set \(m(t) = e^{-\gamma t} h(t)\), then \(h(t) = e^{\gamma t} m(t)\) and
\[
e^{-\gamma t} (J^\alpha h)(t) = \int_0^t e^{-\gamma(t-\tau)} k(t-\tau) m(\tau)\,d\tau
= (k_\gamma * m)(t),
\]
where \(k_\gamma(t) = e^{-\gamma t} k(t) = \frac{1}{\Gamma(\alpha)} e^{-\gamma t} t^{\alpha-1}\).
Applying Young's inequality for convolutions,
\[
\|k_\gamma * m\|_{L^2(\mathbb R_+)} \le \|k_\gamma\|_{L^1(\mathbb R_+)} \|m\|_{L^2(\mathbb R_+)} .
\]
A direct computation gives
\[
\|k_\gamma\|_{L^1(\mathbb R_+)} = \frac{1}{\Gamma(\alpha)} \int_0^\infty e^{-\gamma t} t^{\alpha-1}\,dt = \gamma^{-\alpha},
\]
while \(\|m\|_{L^2(\mathbb R_+)} = \|h\|_{\gamma,0}\). Hence
\[
\|J^\alpha h\|_{\gamma,0} \le \gamma^{-\alpha} \|h\|_{\gamma,0}.
\]

Combining the Plancherel identity \eqref{eq:weighted-plancherel-main}, the commutation relation \eqref{eq:DN-transform-main} and the above estimate for \(J^\alpha\) (applied to boundary functions), we obtain for every \(g\in\mathcal H_\gamma^1(\partial\Omega\times\mathbb R_+)\)
\[
\int_{\mathbb R} \bigl\| (i\theta)^{-\alpha} (\Lambda_1^\theta-\Lambda_2^\theta) \widehat g(\cdot,\theta) \bigr\|_{L^2(\partial\Omega)}^2\,d\sigma
= \langle J^\alpha(\Lambda_1-\Lambda_2)g \rangle_{\gamma,0}^2 \le \gamma^{-2\alpha} \langle (\Lambda_1-\Lambda_2)g \rangle_{\gamma,0}^2 .
\]
Since 
\[\langle (\Lambda_1 - \Lambda_2) g \rangle_{\gamma,0} \le \langle g \rangle_{\gamma,1}\,  \| \Lambda_1 - \Lambda_2 \|_{\mathcal B
\left(\mathcal H_\gamma^1(\partial\Omega\times\mathbb R_+),
H_\gamma^0(\partial\Omega\times\mathbb R_+)\right)}  = \varepsilon \, \langle g \rangle_{\gamma,1},\] 
where \(\varepsilon := \|\Lambda_1-\Lambda_2\|_{\mathcal B(\mathcal H_\gamma^1(\partial\Omega\times\mathbb R_+), H_\gamma^0(\partial\Omega\times\mathbb R_+))}\), then we have
\begin{equation}\label{eq:frequency-DN-estimate-main}
\int_{\mathbb{R}} \| (i\theta)^{-\alpha} (\Lambda_1^\theta - \Lambda_2^\theta) \widehat{g} \|_{L^2(\partial\Omega)}^2 d\sigma \le  \gamma^{-2\alpha} \varepsilon^2 \langle g\rangle_{\gamma,1}^2
\end{equation}

\medskip
\noindent
\textbf{Step 2.  The frequency-domain integral identity.}

Fix \(\theta=\sigma-i\gamma\) and set
\[
V_j(x,\theta) := -p_j(x)(i\theta)^\alpha - q(x).
\]
Then the frequency-domain equation becomes
\[
\Delta u_j + (\theta^2+V_j)u_j = 0 \quad\text{in }\Omega .
\]
Let \(u_j\) be two sufficiently regular solutions of these equations and write \(f_j := u_j|_{\partial\Omega}\). By Green's identity,
\[
\int_\Omega (u_2 \Delta u_1 - u_1 \Delta u_2)\,dx
= \int_{\partial\Omega} \Bigl( u_2 \frac{\partial u_1}{\partial\nu} - u_1 \frac{\partial u_2}{\partial\nu} \Bigr) dS .
\]
Set \(f_j := u_j|_{\partial\Omega}\) and recall that \(\partial_\nu u_j|_{\partial\Omega} = \Lambda_j^\theta f_j\). Then the right-hand side becomes
\[
\int_{\partial\Omega} \bigl( f_2 \Lambda_1^\theta f_1 - f_1 \Lambda_2^\theta f_2 \bigr) dS .
\]
On the left-hand side, using the equations \(\Delta u_j = -(\theta^2+V_j)u_j\), we obtain
\[
u_2\Delta u_1 - u_1\Delta u_2 = (V_2-V_1)u_1u_2 .
\]
Thus,
\[
\int_\Omega (V_2-V_1) u_1 u_2\,dx = \int_{\partial\Omega} \bigl( f_2 \Lambda_1^\theta f_1 - f_1 \Lambda_2^\theta f_2 \bigr) dS .
\]
Because the DtN map is symmetric with respect to the bilinear pairing without complex conjugation,
\[
\int_{\partial\Omega} f_2 \Lambda_1^\theta f_1\,dS = \int_{\partial\Omega} f_1 \Lambda_1^\theta f_2\,dS .
\]
Consequently,
\begin{equation}\label{eq:alessandrini-main}
\int_\Omega (V_2-V_1) u_1 u_2\,dx = \int_{\partial\Omega} f_1 (\Lambda_1^\theta-\Lambda_2^\theta) f_2\,dS .
\end{equation}
Noticing that
\[
V_2-V_1 = -p_2(i\theta)^\alpha - q + p_1(i\theta)^\alpha + q = P(i\theta)^\alpha,\qquad P:=p_1-p_2,
\]
identity \eqref{eq:alessandrini-main} turns into
\begin{equation}\label{eq:basic-identity-main}
(i\theta)^\alpha \int_\Omega P(x) u_1(x) u_2(x)\,dx
= \int_{\partial\Omega} f_1 (\Lambda_1^\theta-\Lambda_2^\theta) f_2\,dS .
\end{equation}

\medskip
\noindent
\textbf{Step 3.  Construction of complex geometrical optics solutions.}

We construct CGO solutions for
\[
\Delta u_j + (\theta^2+V_j)u_j = 0 \quad\text{in }\Omega,\qquad j=1,2.
\]
By assumption, \(p_j,q\in C^2(\overline\Omega)\) and \(\|p_j\|_{C^2(\overline\Omega)}+\|q\|_{C^2(\overline\Omega)}\le K\). Hence
\[
V_j(\cdot,\theta)\in C^2(\overline\Omega),\qquad
\|V_j(\cdot,\theta)\|_{C^2(\overline\Omega)} \le C K (1+|\theta|^\alpha).
\]

Apply Lemma \ref{lem ell eq} with \(\ell=2\) and \(a(x)=V_j(x,\theta)\). There exist constants \(C_1,C_2,C_3,C_4>0\), depending only on \(\Omega\) and \(\ell=2\), such that whenever \(\zeta_j\in\mathbb C^n\) satisfies
\[
\zeta_j\cdot\zeta_j + \theta^2 = 0,\quad |\operatorname{Im}\zeta_j|\ge\sqrt2,\quad |\zeta_j|\ge C_1\|V_j(\cdot,\theta)\|_{C^2(\overline\Omega)},
\]
the equation admits a solution of the form
\[
u_j(x,\zeta_j,\theta) = e^{\zeta_j\cdot x}\bigl( 1 + w_j(x,\zeta_j,\theta) \bigr),
\]
with the estimate
\begin{equation}\label{eq:cgo-remainder-main}
\|w_j(\cdot,\zeta_j,\theta)\|_{H^2(\Omega)} \le \frac{C_2}{|\zeta_j|} \|V_j(\cdot,\theta)\|_{C^2(\overline\Omega)} .
\end{equation}
Inserting the bound for \(V_j\) yields
\begin{equation}\label{eq:cgo-remainder-main-2}
\|w_j\|_{H^2(\Omega)} \le C \frac{1+|\theta|^\alpha}{|\zeta_j|}.
\end{equation}
Moreover, Lemma \ref{lem ell eq} provides the norm estimate
\begin{equation}\label{eq:cgo-trace-main}
\|u_j(\cdot,\zeta_j,\theta)\|_{H^2(\Omega)} \le C e^{C|\zeta_j|}.
\end{equation}

Now we choose concrete vectors \(\zeta_j\). Take \(\gamma\ge\gamma_0\) sufficiently large so that for all \(\sigma\in\mathbb R\),
\[
C_1 C K (1+|\theta|^\alpha) \le |\theta|,
\]
which is possible since \(|\theta|\ge\gamma\) and \(0<\alpha<1\). Fix \(\xi\in\mathbb R^n\), let \(\rho>0\), take \(\sigma\in[0,\rho]\) and \(r\ge\sqrt2\). Since \(n\ge3\), we can pick real vectors \(\mu,\lambda\in\mathbb R^n\) satisfying
\[
\xi\cdot\mu = 0,\quad \xi\cdot\lambda = 0,
\]
and
\[
|\mu|^2 = \sigma^2 + r^2,\qquad
|\lambda|^2 = \frac{|\xi|^2}{4} + \gamma^2 + r^2,\qquad
\mu\cdot\lambda = \sigma\gamma .
\]
The last condition is fulfilled by adjusting the angle between \(\mu\) and \(\lambda\) in the plane orthogonal to \(\xi\), because \(|\mu||\lambda|\ge\sigma\gamma\).

Define
\[
\zeta_j = -\frac{i\xi}{2} + (-1)^j (i\mu+\lambda),\qquad j=1,2,
\]
so that \(\zeta_1+\zeta_2 = -i\xi\). A direct computation gives
\[
\zeta_j\cdot\zeta_j = -\frac{|\xi|^2}{4} - |\mu|^2 + |\lambda|^2 + 2i\mu\cdot\lambda
= \gamma^2 - \sigma^2 + 2i\sigma\gamma = -\theta^2,
\]
hence \(\zeta_j\cdot\zeta_j+\theta^2=0\). Moreover,
\[
|\operatorname{Im}\zeta_j| = \Bigl|(-1)^j\mu - \frac{\xi}{2}\Bigr|
= \Bigl(|\mu|^2 + \frac{|\xi|^2}{4}\Bigr)^{1/2} \ge r \ge \sqrt2,
\]
and
\[
|\zeta_j|^2 = \frac{|\xi|^2}{2} + |\theta|^2 + 2r^2 \ge |\theta|^2 .
\]
Thus \(|\zeta_j|\ge |\theta| \ge C_1\|V_j\|_{C^2(\overline\Omega)}\). All the assumptions of Lemma \ref{lem ell eq} are thus satisfied, and the CGO solutions constructed above are well defined.

\medskip
\noindent
\textbf{Step 4.  Low-frequency decomposition.}

Insert the CGO ansatz into the identity \eqref{eq:basic-identity-main}. Using
\[
u_1u_2 = e^{(\zeta_1+\zeta_2)\cdot x} (1+w_1)(1+w_2) = e^{-i\xi\cdot x} (1 + w_1 + w_2 + w_1w_2),
\]
we obtain
\[
(i\theta)^\alpha \biggl[ \int_\Omega P(x) e^{-i\xi\cdot x}\,dx
+ \int_\Omega P(x) e^{-i\xi\cdot x} (w_1+w_2+w_1w_2)\,dx \biggr]
= \int_{\partial\Omega} f_1 (\Lambda_1^\theta-\Lambda_2^\theta) f_2\,dS .
\]
Define the Fourier transform of the zero extension of \(P\) by
\[
\widetilde P(\xi) := (2\pi)^{-n/2} \int_\Omega P(x) e^{-i\xi\cdot x}\,dx,
\]
and the remainder
\[
E(\xi,\theta) := \int_\Omega P(x) e^{-i\xi\cdot x} (w_1+w_2+w_1w_2)\,dx .
\]
The above relation rewrites as
\[
(i\theta)^\alpha \bigl[ (2\pi)^{n/2} \widetilde P(\xi) + E(\xi,\theta) \bigr]
= \int_{\partial\Omega} f_1 (\Lambda_1^\theta-\Lambda_2^\theta) f_2\,dS .
\]
Equivalently,
\begin{equation}\label{eq:P-decomposition-main}
\widetilde P(\xi) = B(\xi,\theta) + \mathcal E(\xi,\theta),
\end{equation}
where
\[
B(\xi,\theta) := \frac{(i\theta)^{-\alpha}}{(2\pi)^{n/2}} \int_{\partial\Omega} f_1 (\Lambda_1^\theta-\Lambda_2^\theta) f_2\,dS,\qquad
\mathcal E(\xi,\theta) := -\frac{1}{(2\pi)^{n/2}} E(\xi,\theta).
\]
Consequently,
\begin{equation}\label{eq:P-square-main}
|\widetilde P(\xi)|^2 \le C\bigl( |B(\xi,\theta)|^2 + |\mathcal E(\xi,\theta)|^2 \bigr).
\end{equation}

\medskip
\noindent
\textbf{Step 5.  Estimates of the remainder and the boundary term.}

We first estimate \(\mathcal E(\xi,\theta)\). From \eqref{eq:cgo-remainder-main-2} and \(|\zeta_j|\ge r\),
\[
\|w_j\|_{L^2(\Omega)} \le C\frac{1+|\theta|^\alpha}{r}.
\]
Using \(\|P\|_{L^\infty(\Omega)}\le 2K\) and Hölder's inequality,
\[
|E(\xi,\theta)| \le \|P\|_{L^\infty(\Omega)} \int_\Omega (|w_1|+|w_2|+|w_1w_2|)\,dx
\le C \Bigl( \frac{1+|\theta|^\alpha}{r} + \frac{(1+|\theta|^\alpha)^2}{r^2} \Bigr).
\]
For \(r\ge1\) and large \(|\theta|\) the dominating term is \(\frac{(1+|\theta|^\alpha)^2}{r}\); hence
\begin{equation}\label{eq:E-bound-main}
|\mathcal E(\xi,\theta)| \le C \frac{(1+|\theta|^\alpha)^2}{r}.
\end{equation}

Now we turn to the boundary term \(B(\xi,\theta)\). By the trace theorem and \eqref{eq:cgo-trace-main},
\[
\|f_1\|_{L^2(\partial\Omega)} \le C\|u_1\|_{H^1(\Omega)} \le C e^{C|\zeta_1|},
\]
so that
\begin{equation}\label{eq:B-pre-main}
|B(\xi,\theta)| \le C e^{C|\zeta_1|} \bigl\| (i\theta)^{-\alpha} (\Lambda_1^\theta-\Lambda_2^\theta) f_2 \bigr\|_{L^2(\partial\Omega)} .
\end{equation}
To exploit \eqref{eq:frequency-DN-estimate-main}, we construct a normalized boundary function. For fixed \(\xi\in\mathbb R^n\) and \(r\ge\sqrt2\), define
\[
\widehat g_{\xi,r}(\cdot,\sigma) := \frac{f_2(\cdot,\sigma)}{e^{C|\zeta_2|}(1+\sigma^2)}, \quad \sigma\in \mathbb{R},
\]
where \(C>0\) is chosen large enough so that, by the trace theorem and the CGO estimates,
\[
\|\widehat g_{\xi,r}(\cdot,\sigma)\|_{H^1(\partial\Omega)} \le \frac{C}{1+\sigma^2}.
\]
Then
\[
\int_{\mathbb R} (1+\sigma^2) \|\widehat g_{\xi,r}(\cdot,\sigma)\|_{H^1(\partial\Omega)}^2 d\sigma \le C.
\]
Therefore, the inverse Fourier--Laplace transform \(g_{\xi,r}\) belongs to
\(\mathcal H_\gamma^1(\partial\Omega\times\mathbb R_+)\), and \(\langle g_{\xi,r}\rangle_{\gamma,1}\le C\). Applying \eqref{eq:frequency-DN-estimate-main} to \(g_{\xi,r}\) gives
\[
\int_0^\rho \bigl\| (i\theta)^{-\alpha} (\Lambda_1^\theta-\Lambda_2^\theta) \widehat g_{\xi,r}(\cdot,\sigma) \bigr\|_{L^2(\partial\Omega)}^2 d\sigma \le C\varepsilon^2 .
\]
Since \(f_2 = e^{C|\zeta_2|}(1+\sigma^2)\widehat g_{\xi,r}\) and \(|\zeta_2|\le C\sqrt{|\xi|^2+\rho^2+r^2}\) for \(\sigma\in[0,\rho]\), we obtain
\[
\int_0^\rho \bigl\| (i\theta)^{-\alpha} (\Lambda_1^\theta-\Lambda_2^\theta) f_2 \bigr\|_{L^2(\partial\Omega)}^2 d\sigma
\le C e^{C\sqrt{|\xi|^2+\rho^2+r^2}} (1+\rho^2)^2 \varepsilon^2 .
\]
Inserting this into \eqref{eq:B-pre-main} and integrating in \(\sigma\) yields
\begin{equation}\label{eq:B-sigma-main}
\int_0^\rho |B(\xi,\sigma-i\gamma)|^2 d\sigma \le C e^{C\sqrt{|\xi|^2+\rho^2+r^2}} (1+\rho^2)^2 \varepsilon^2 .
\end{equation}

\medskip
\noindent
\textbf{Step 6.  Low-frequency accumulation and high-frequency cut-off.}

Consider the region
\[
D_\rho := \{ (\xi,\sigma)\in\mathbb R^n\times\mathbb R : |\xi|\le\rho,\; 0\le\sigma\le\rho \}.
\]
Since \(\widetilde P(\xi)\) is independent of \(\sigma\), integrating \eqref{eq:P-square-main} over \(D_\rho\) gives
\[
\rho \int_{|\xi|\le\rho} |\widetilde P(\xi)|^2 d\xi
\le C \iint_{D_\rho} \bigl( |B|^2 + |\mathcal E|^2 \bigr) d\sigma d\xi .
\]
Using \eqref{eq:B-sigma-main},
\[
\int_{|\xi|\le\rho}\int_0^\rho |B|^2 d\sigma d\xi
\le C e^{C\sqrt{2\rho^2+r^2}} (1+\rho^2)^2 \rho^n \varepsilon^2 .
\]
From \eqref{eq:E-bound-main}, for \(\rho\ge\gamma\) and \(0\le\sigma\le\rho\) we have \(|\theta|\le C\rho\), hence
\[
|\mathcal E|^2 \le C \frac{(1+\rho^\alpha)^4}{r^2},
\qquad
\int_{|\xi|\le\rho}\int_0^\rho |\mathcal E|^2 d\sigma d\xi \le C \frac{(1+\rho^\alpha)^4}{r^2} \rho^{n+1}.
\]
Therefore,
\[
\rho \int_{|\xi|\le\rho} |\widetilde P(\xi)|^2 d\xi
\le C \Bigl( e^{C\sqrt{2\rho^2+r^2}} (1+\rho^2)^2 \rho^n \varepsilon^2
+ \frac{(1+\rho^\alpha)^4}{r^2} \rho^{n+1} \Bigr).
\]
Dividing by \(\rho\) we obtain the low-frequency estimate
\begin{equation}\label{eq:low-frequency-main}
\int_{|\xi|\le\rho} |\widetilde P(\xi)|^2 d\xi
\le C \Bigl( e^{C\sqrt{2\rho^2+r^2}} (1+\rho^2)^2 \rho^{n-1} \varepsilon^2
+ \frac{(1+\rho^\alpha)^4}{r^2} \rho^n \Bigr).
\end{equation}

For the high frequencies, fix \(s\in(0,1/2)\). Since \(P=p_1-p_2\in
C^2(\overline\Omega)\), its zero extension to \(\mathbb R^n\), still denoted
by \(P\), belongs to \(H^s(\mathbb R^n)\) and
\[
\|P\|_{H^s(\mathbb R^n)}^2 = \int_{\mathbb R^n} (1+|\xi|^2)^s |\widetilde P(\xi)|^2 d\xi \le C K^2 .
\]
Hence, for \(\rho\ge1\),
\begin{equation}\label{eq:high-frequency-main}
\int_{|\xi|>\rho} |\widetilde P(\xi)|^2 d\xi
\le (1+\rho^2)^{-s} \int_{\mathbb R^n} (1+|\xi|^2)^s |\widetilde P(\xi)|^2 d\xi
\le C \rho^{-2s}.
\end{equation}

Combining \eqref{eq:low-frequency-main} and \eqref{eq:high-frequency-main}, we get
\begin{equation}\label{eq:global-main}
\int_{\mathbb R^n} |\widetilde P(\xi)|^2 d\xi
\le C \Bigl( e^{C\sqrt{2\rho^2+r^2}} (1+\rho^2)^2 \rho^{n-1} \varepsilon^2
+ \frac{(1+\rho^\alpha)^4}{r^2} \rho^n + \rho^{-2s} \Bigr).
\end{equation}

\medskip
\noindent
\textbf{Step 7.  Logarithmic stability estimate.}

We now choose the free parameters to balance the right-hand side. Let
\[
r = \rho^\delta,\qquad \delta := \frac{n+4\alpha+2s}{2}.
\]
Since \(n\ge3\), \(0<\alpha<1\) and \(s>0\), we have \(\delta>1\). For large \(\rho\),
\[
\sqrt{2\rho^2+r^2} \le C\rho^\delta,\qquad
(1+\rho^2)^2\rho^{n-1} \le C\rho^{n+3},
\]
and
\[
\frac{(1+\rho^\alpha)^4}{r^2} \rho^n \le C \rho^{n+4\alpha-2\delta} = C \rho^{-2s}.
\]
Thus \eqref{eq:global-main} simplifies to
\begin{equation}\label{eq:pre-log-main}
\int_{\mathbb R^n} |\widetilde P(\xi)|^2 d\xi
\le C \Bigl( e^{C\rho^\delta} \rho^{n+3} \varepsilon^2 + \rho^{-2s} \Bigr).
\end{equation}

Now relate \(\rho\) to \(\varepsilon\) by requiring
\[
C_0 \rho^\delta + (n+3+2s)\log\rho = \log\frac1{\varepsilon^2},
\]
where \(C_0>0\) is a sufficiently large constant (larger than the constant appearing in the exponent of \eqref{eq:pre-log-main}). Then \(\rho\to\infty\) as \(\varepsilon\to0\) and \(\rho^\delta \sim \frac{1}{C_0}\log\frac1{\varepsilon^2}\). This choice guarantees
\[
e^{C\rho^\delta} \rho^{n+3} \varepsilon^2 \le C \rho^{-2s},
\]
so that \eqref{eq:pre-log-main} reduces to
\[
\int_{\mathbb R^n} |\widetilde P(\xi)|^2 d\xi \le C \rho^{-2s}.
\]
Because \(\rho^\delta \ge c\log\frac1\varepsilon\) for small \(\varepsilon\), we have \(\rho^{-2s} \le C \bigl(\log\frac1\varepsilon\bigr)^{-2s/\delta}\). With \(\delta\) as above,
\[
\frac{2s}{\delta} = \frac{4s}{n+4\alpha+2s}.
\]
Hence
\begin{equation}\label{eq:fourier-final-main}
\int_{\mathbb R^n} |\widetilde P(\xi)|^2 d\xi
\le C \Bigl(\log\frac1\varepsilon\Bigr)^{-\frac{4s}{n+4\alpha+2s}} .
\end{equation}

Finally, let \(P=p_1-p_2\) be extended by zero to \(\mathbb R^n\). By
Plancherel's theorem and by the definition of \(\widetilde P\), we have
\[
\|p_1-p_2\|_{L^2(\Omega)}^2
=
\|P\|_{L^2(\mathbb R^n)}^2
=
\int_{\mathbb R^n}|\widetilde P(\xi)|^2\,d\xi .
\]
Combining this with \eqref{eq:fourier-final-main} and taking square roots, we obtain
\[
\|p_1-p_2\|_{L^2(\Omega)} \le C \Bigl(\log\frac1\varepsilon\Bigr)^{-\tau},
\qquad \tau = \frac{2s}{n+4\alpha+2s}.
\]
Therefore,
\[
\|p_1-p_2\|_{L^2(\Omega)} \le \omega(\varepsilon),
\]
where \(\omega(\varepsilon) \sim C \bigl(\log (1/\varepsilon)\bigr)^{-\tau}\) as \(\varepsilon\to 0\). This is precisely the conditional logarithmic stability estimate stated in Theorem \ref{thm1.3}.
\end{proof}


\section{Concluding remarks}\label{sec5}

By applying the Fourier--Laplace transform in time, we reduce the time-fractional damped wave equation to a family of elliptic equations in the frequency
domain. We then construct complex geometrical optics solutions for the resulting
elliptic problems and derive an integral identity relating the difference of
two damping coefficients to the difference of their DtN maps.
A decomposition into low- and high-frequency components yields a conditional
logarithmic stability estimate for the recovery of the damping coefficient. The
logarithmic modulus reflects the severe ill-posedness of the inverse problem.
Moreover, the stability exponent depends explicitly on the fractional order
\(\alpha\); for fixed dimension \(n\) and fixed \(s\in(0,1/2)\), the exponent decreases
as \(\alpha\) increases. Hence the stability deteriorates as the order of the
nonlocal damping term becomes larger. In particular, the stability estimate also
implies uniqueness of the damping coefficient from the Dirichlet-to-Neumann map.


\section*{Acknowledgments}

This work was partially supported by the National Natural Science Foundation of China
(No. 12271277) and the Open Research Fund of Key Laboratory of Nonlinear Analysis \&
Applications (Central China Normal University), Ministry of Education, China. The second author also thanks Ningbo Youth Leading Talent Project (No.2024QL045).

\bibliographystyle{unsrt}

\end{document}